\newtheorem{thm}{Theorem}[section]
\newtheorem{lemm}[thm]{Lemma}
\newtheorem{prop}[thm]{Proposition}
\newtheorem{cor}[thm]{Corollary}
\newtheorem{conj}{Conjecture}
\theoremstyle{definition}
\newtheorem{defn}[thm]{Definition}
\newtheorem{rem}[thm]{Remark}
\numberwithin{equation}{section}
\newcommand{\bC}{{\mathbb C}}
\newcommand{\bN}{{\mathbb N}}
\newcommand{\bZ}{{\mathbb Z}}
\newcommand{\cH}{{\mathcal H}}
\newcommand{\cL}{{\mathcal L}}
\newcommand{\cM}{{\mathcal M}}
\newcommand{\cS}{{\mathcal S}}
\DeclareMathOperator{\Hamm}{Hamm}
\DeclareMathOperator{\tr}{tr}
\DeclareMathOperator{\Aut}{Aut}
\newcommand{\ip}[1]{\langle #1 \rangle}
\newcommand\Sym{\operatorname{Sym}}
\begin{document}



\title{On sofic approximations of non amenable groups}

\author{Ben Hayes}
\address{\parbox{\linewidth}{Department of Mathematics, University of Virginia, \\
141 Cabell Drive, Kerchof Hall,
P.O. Box 400137
Charlottesville, VA 22904}}
\email{brh5c@virginia.edu}
\urladdr{https://sites.google.com/site/benhayeshomepage/home}

\author{Srivatsav Kunnawalkam Elayavalli}
\address{\parbox{\linewidth}{Department of Mathematics, University of California, \\
San Diego, 9500 Gilman Drive \# 0112, La Jolla, CA 92093}}
\email{srivatsav.kunnawalkam.elayavalli@vanderbilt.edu}
\urladdr{https://sites.google.com/view/srivatsavke}

\thanks{B. Hayes gratefully acknowledges support from the NSF grant DMS-2144739. S. Kunnawalkam Elayavalli gratefully acknowledges support from the Simons Postdoctoral Fellowship.}

\begin{abstract}
In this paper  we exhibit for every non amenable group that is initially sub-amenable (sometimes also referred to as LEA), two sofic approximations that are not conjugate by any automorphism of the universal sofic group. This  addresses a question of P\v{a}unescu and generalizes the Elek-Szabo uniqueness theorem for sofic approximations.
\end{abstract}

\maketitle


\section{Introduction}

In group theory and functional analysis, one is frequently interested in the study of approximate homomorphisms of groups. These are sequences of maps $\phi_{n}\colon G\to H_{n}$ where $G,H_{n}$ groups, each $H_{n}$ has a bi-invariant metric $d_{n}$ and the sequence satisfies
\[\lim_{n\to\infty}d_{n}(\phi_{n}(gh),\phi_{n}(g)\phi_{n}(h))=0.\]
Part of the interest in this stems from the situation where $H_{n}$ is in some sense ``finitary" (e.g. $H_{n}$ could be finite, or it could be a subgroup of the invertible operators on a finite-dimensional space). Relaxing the homomorphism assumptions facilitated generalizations of many interesting techniques to broader classes of groups. Particular instances of this philosophy have led to Gromov's notion of soficity of groups, Connes embeddability of groups (sometimes called hyperlinearity) and their group von Neumann algebras (see  \cite{surveysofic} for a survey), linear soficity (see \cite{linearsofic}), weak soficity (see \cite{glebskyrivera}) etc. These notions have led to the resolution of several conjectures, as well as created entirely new fields of investigation, such as sofic entropy (see \cite{surveybowen}).

While approximate homomorphisms have great utility, an inherent difficulty in their study is that, while we have many examples of groups having interesting approximate homomorphisms, the classification of approximate homomorphisms themselves is incredibly challenging. A natural candidate for, say, sofic approximations is uniqueness up to \emph{asymptotic conjugacy}: namely given a pair $\sigma_{n},\phi_{n}\colon G\to \Sym(d_{n})$ classifying when there is a $p_{n}\in \Sym(d_{n})$ so that $p_{n}\sigma_{n}(g)p_{n}^{-1}\approx \phi_{n}(g)$ as $n\to\infty$ in the Hamming distance. A celebrated result, which is implicitly proved by Kerr-Li  \cite[Lemma 4.5]{KLi2}, and explicitly stated and proved independently by Elek-Szabo \cite{elekxzabo}, states that when $G$ is amenable any two sofic approximations are asymptotically conjugate (the analogous statement for amenable groups in the context of Connes embedding was first proved by Connes \cite{Connes}). Separate proofs were later given in \cite[Theorem 5.1 and Corollary 5.2]{PoppArg} and \cite[Lemma 4]{JHel}
.
The converse, that a non amenable sofic group admits two non conjugate embeddings  is also strikingly established in \cite{elekxzabo}, which is an analogue of the celebrated Jung's theorem \cite{JungTubularity} (see also \cite{ScottSri2019ultraproduct, AGKE-GeneralizedJung} for generalizations of Jung's theorem that also  motivate the present work).

In \cite{Paunescu} P\v{a}unescu, following up on the operator algebraic setting in Brown \cite{Browntopological}, investigates the space of approximate homomorphisms  of a group $G$ modulo approximate conjugacy and shows that it has a natural convex structure. The Kerr-Li, Elek-Szabo uniqueness theorems tells us this space is a point when $G$ is amenable, and P\v{a}unescu showed that it is a nonseparable metrizable space when $G$ is not amenable.

A clean way to phrase this is by ultraproduct techniques. By taking a suitable metric ultraproduct of symmetric groups, a sofic approximation turns into a genuine injective homomorphism and approximate conjugacy turns into genuine conjugacy. We call such an ultraproduct a \emph{universal sofic group} (see Section \ref{sec:proof of main} for the precise definition).
Our  main result addresses a question of P\v{a}unescu (see  the second paragraph in the introduction of \cite{paunescuaut}) which asks about replacing conjugation with arbitrary automorphisms in the Elek-Szabo Theorem (Theorem 2 of \cite{elekxzabo}). We say that two sofic  embedddings $\pi_{1},\pi_{2}$ of $G$ into a universal sofic group $\cS$ are \emph{automorphically conjugate} if there is an automorphism $\Phi\in \Aut(\cS)$ so that $\Phi\circ\pi_{1}=\pi_{2}$. 
We say that $G$ satisfies the \emph{generalized Elek-Szabo property} if  any two sofic embeddings $\pi_1,\pi_2$ of $G$ into a universal sofic group $\mathcal{S}$ are automorphically conjugate.
The following is an if and only if characterization of the above property, in the presence of one additional assumption.

\begin{thm}\label{Gen Elek Szabo}
   Let $G$ be a sofic group that is initially subamenable. Then $G$ satisfies the generalized Elek-Szabo property if and only if $G$ is amenable.
\end{thm}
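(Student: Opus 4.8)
The plan is to treat the two implications separately, with essentially all of the difficulty in one of them. The direction ``$\Leftarrow$'' is the Kerr--Li/Elek--Szabo uniqueness theorem (\cite{elekxzabo}, \cite[Lemma 4.5]{KLi2}), reread inside the ultraproduct: if $G$ is amenable and $\pi_1,\pi_2\colon G\to\mathcal S$ are two sofic embeddings, lift them to sofic approximations $(\sigma_n),(\phi_n)$ into $\Sym(d_n)$; the uniqueness theorem gives $p_n\in\Sym(d_n)$ so that $\sigma_n$ and $\phi_n$ are asymptotically conjugate by $p_n$ in Hamming distance, whence $p=[(p_n)]\in\mathcal S$ satisfies $\Ad(p)\circ\pi_1=\pi_2$ with $\Ad(p)\in\Inn(\mathcal S)\subseteq\Aut(\mathcal S)$. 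So the generalized Elek--Szabo property holds.

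For ``$\Rightarrow$'' I would prove the contrapositive: a sofic, initially subamenable, non-amenable $G$ fails the property. The key idea is to isolate an invariant of sofic embeddings $\pi\colon G\to\mathcal S$ that is \emph{visibly} preserved by every automorphism of $\mathcal S$, so that ruling out arbitrary automorphisms is no harder than ruling out inner ones, and then to produce two sofic embeddings lying on opposite sides of it. The invariant I would use is the property $(\star)$: the relative commutant $\pi(G)'\cap\mathcal S$ contains a subgroup isomorphic to $G$. This is automatically $\Aut(\mathcal S)$-invariant, for if $\Phi\in\Aut(\mathcal S)$ and $\Phi\circ\pi_1=\pi_2$ then $\Phi$ restricts to an isomorphism $\pi_1(G)'\cap\mathcal S\to\pi_2(G)'\cap\mathcal S$, so $\pi_1$ satisfies $(\star)$ if and only if $\pi_2$ does. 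Thus it suffices to produce one sofic embedding $\pi_1$ of $G$ satisfying $(\star)$ and one sofic embedding $\pi_2$ failing it.

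The embedding $\pi_1$ comes straight from initial subamenability. Fix finite symmetric sets $\{e\}\subseteq F_1\subseteq F_2\subseteq\cdots$ exhausting $G$, amenable groups $\Gamma_k$, and injective partial homomorphisms $\psi_k\colon F_k\to\Gamma_k$; choose two-sided Følner sets $E_k\subseteq\Gamma_k$ that are sufficiently invariant under $\psi_k(F_k)$, and let $\sigma_k\colon G\to\Sym(E_k)$ send $g\in F_k$ to the permutation agreeing with left translation by $\psi_k(g)$ wherever that stays inside $E_k$, and $g\notin F_k$ to the identity. The Følner condition together with the relation $\psi_k(gh)=\psi_k(g)\psi_k(h)$ makes $(\sigma_k)_k$ a sofic approximation of $G$, which, after the routine reindexing into a prescribed universal sofic group (using the Hamming-isometric amplifications $\tau\mapsto\tau^{\oplus m}$), yields an embedding $\pi_1\colon G\hookrightarrow\mathcal S$. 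Running the same construction with right translation by $\psi_k(g)^{-1}$ produces permutations $\rho_k$ that commute with $\sigma_k$ off an $o(|E_k|)$-set, satisfy $\rho_k(g)\rho_k(h)=\rho_k(gh)$ off an $o(|E_k|)$-set, and are fixed-point-free off an $o(|E_k|)$-set when $g\ne e$; in the ultraproduct these assemble into an injective homomorphism $G\hookrightarrow\mathcal S$ whose image lies in $\pi_1(G)'\cap\mathcal S$, so $(\star)$ holds for $\pi_1$. (Applied to an honest Følner sofic approximation, the same construction shows $(\star)$ holds for every sofic embedding of an amenable group, in keeping with the ``$\Leftarrow$'' direction.)

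The embedding $\pi_2$, for which $(\star)$ must fail, is where non-amenability enters, and producing it is the main obstacle. What is required is a sofic embedding $\pi_2\colon G\hookrightarrow\mathcal S$ whose relative commutant $\pi_2(G)'\cap\mathcal S$ is too small to contain a copy of $G$ --- for instance amenable, or trivial. The natural mechanism is rigidity of the finite models: if the measure-preserving $G$-action on the Loeb space attached to $\pi_2$ has a spectral gap, then the standard averaging argument forces almost-commuting elements close to the genuine commutant, and if in addition the underlying finite $G$-sets have asymptotically trivial centralizer in the ambient symmetric groups, then $\pi_2(G)'\cap\mathcal S$ collapses. A pure spectral gap requirement is, however, unavailable for every non-amenable sofic group (it already fails once $G$ has an infinite amenable normal subgroup), so in general one must combine a genuinely rigid, expanding ingredient supported on a non-amenable quotient with arbitrary, unconstrained behaviour on the complementary directions, arranged so that the centralizer of the combined action --- hence $\pi_2(G)'\cap\mathcal S$ --- is amenable. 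Carrying this out for an arbitrary non-amenable sofic group is the technical heart of the argument; granting it, $\pi_1$ satisfies $(\star)$ while $\pi_2$ does not, so no automorphism of $\mathcal S$ carries $\pi_1$ to $\pi_2$, and $G$ fails the generalized Elek--Szabo property.
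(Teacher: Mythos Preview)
Your proposal has a genuine gap: the construction of $\pi_2$ is missing. You correctly identify that any invariant of the abstract isomorphism type of $\pi(G)'\cap\mathcal S$ (such as your property $(\star)$) is automatically $\Aut(\mathcal S)$-invariant, and your construction of $\pi_1$ satisfying $(\star)$ from initial subamenability is sound. But for the other side you need a sofic embedding of an arbitrary non-amenable sofic group whose relative commutant in $\mathcal S$ is too small to contain a copy of $G$, and you do not produce one --- you explicitly ``grant'' it after noting that a spectral-gap mechanism is unavailable in general. This is not a detail to be filled in later: producing a sofic embedding with amenable (let alone trivial) relative commutant for an \emph{arbitrary} non-amenable sofic group is not known and is likely quite hard; your sketch of combining an expanding piece on a non-amenable quotient with something unconstrained elsewhere presupposes structure on $G$ that is not given.

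The paper avoids this difficulty by using a different invariant: whether $\pi(G)'\cap\mathcal S$ acts \emph{ergodically} on the Loeb algebra $L^\infty(\mathcal L)$. Both sides of this invariant are already available in the literature --- an ergodic embedding comes from initial subamenability via P\u{a}unescu \cite[Theorem~2.13]{Pau1} (essentially your $\pi_1$), and a non-ergodic one comes from taking a nontrivial convex combination of two non-conjugate embeddings furnished by Elek--Szab\'o, using \cite[Theorem~2.10]{Pau1}. The price is that ergodicity on $\mathcal L$ is \emph{not} an invariant of the abstract group $\pi(G)'\cap\mathcal S$, so one must show separately that every $\Phi\in\Aut(\mathcal S)$ preserves $L^\infty(\mathcal L)$; this is where P\u{a}unescu's theorem that all automorphisms of $\mathcal S$ are pointwise inner \cite{paunescuaut} enters, together with an extension of $\Phi$ to $W^*(\mathcal S)$ and the observation that $L^\infty(\mathcal L)\subseteq W^*(\mathcal S)$. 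In short: your invariant is trivially $\Aut(\mathcal S)$-stable but you cannot separate it; the paper's invariant requires real work to show is $\Aut(\mathcal S)$-stable, but separating it is immediate from existing results.
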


Unpublished work of Farah-Hart-Sherman shows that the continuum hypothesis implies that there are outer automorphisms of the universal sofic group (see Corollary 2.6.7 of \cite{CapLup}). Thus one may a priori ask whether or not a group satsifies the generalized Elek-Szabo property is independent of ZFC. Our work shows an unconditional result for initially subamenable groups. 

A group is \emph{initially subamenable} if for all $F\subseteq G$ finite, there is an amenable group $H$ and an injective map $\phi\colon F\to H$ so that $\phi(xy)=\phi(x)\phi(y)$ for all pairs $(x,y)\in F\times F$ with the property that $xy\in F$ (i.e. $\phi$ is a homomorphism ``when it makes sense to be").
Equivalently, a group $G$ is initially subamenable if it is the limit of a sequence of amenable groups in the space of marked groups. In \cite{CSMC}, this property is referred to as LEA or locally embeddable into amenable groups.    This is a large family of sofic groups containing all residually finite groups.  Gromov asked if all sofic groups are initially subamenable (see \cite{Gromov1}) and this was answered in the negative by Cornulier in \cite{cornulier}.

\begin{rem}
 One of our motivations for obtaining Theorem \ref{Gen Elek Szabo} is the existence problem of a non sofic group. Recall that an amalgamated product of sofic groups with amenable amalgam is sofic (see \cite{elekxzabo}). In the absence of the amenability assumption for the amalgam, the proof breaks down early on because of the inability to  ``patch'' the embedding in the amalgam. Therefore, the first step to constructing a non sofic group in this way, is to  carefully understand the space of sofic embeddings up to conjugacy.
\end{rem}


The proof of the above theorem combines three ideas: first is the identification of $L^{\infty}$ functions on the Loeb measure space inside the weak operator topology closure of the span of the universal sofic group viewed inside a matrix ultraproduct. We also have to show that any automorphism of the universal sofic group induces an automorphism of $L^{\infty}$ functions on the Loeb measure space which intertwines the actions of the universal sofic group on the Loeb measure space. Both of these rely on a theorem of P\v{a}unescu that every automorphism of the universal sofic group is pointwise inner \cite{paunescuaut}.
The second idea is the theory of extreme points of sofic embeddings of non amenable groups in the work of P\v{a}unescu \cite{Pau1}. The third idea is the
original result of Elek-Szabo (Theorem 2 in \cite{elekxzabo}) which allows one to construct an embedding whose commutant does not act ergodically on the Loeb measure space. The proof is carried out in Section 3.

\begin{rem}
 Our proof of Theorem \ref{Gen Elek Szabo} essentially shows that if $G$ is a non amenable group that admits a sofic embedding whose commutant action on the Loeb space is ergodic, then $G$ does not satisfy the generalized Elek-Szabo property.  Hence the initial subamenability assumption in Theorem \ref{Gen Elek Szabo} can be removed if there is a positive answer to the following conjecture.
\end{rem}

\begin{conj}
    Every sofic group $G$ admits an embedding $\pi: G\to \mathcal{S}$  whose centralizer acts ergodically on the Loeb measure space.
\end{conj}

We remark that in \cite[Theorem 4.8]{Paunescu} that if one works with the ultrapower of the full group of the hyperfinite equivalence relation instead of the universal sofic group, then having a centralizer which acts erogdically characterizes being an extreme point in the space of sofic embeddings. Thus one should expect the above conjecture to follow by proving that the convex structure in \cite{Pau1} always has an extreme point.

\begin{rem}
    We thank the anonymous referee for pointing out that our results also apply in the situation of multiples: one can consider various universal sofic groups by considering instead $\mathcal{S}_f:=\prod_{n\to \omega}(\text{Sym}_{nf(n)}, d_{nf(n)})$ where $f:\mathbb{N}\to \mathbb{N}$, and naturally consider for any sofic embedding $\pi: G\to \mathcal{S}$ the amplification $\pi_f:G\to \mathcal{S}_f$ given by $\pi_f(g)= (\pi(g)\otimes 1_{f(n)})_{n\to\omega}$. From Proposition 2.8 of \cite{Pau1} and the fact that Elek-Szabo's original proof handles the multiples case as well (see also Theorem 6.18 in \cite{ScottSri2019ultraproduct}), we arrive at the result that if $G$ is a non amenable initially subamenable group then  $G$ admits two sofic embeddings $\pi_{1},\pi_{2}$ so that all their multiples are not automorphically conjugate (see Section \ref{mul} and in particular,
    Corollary \ref{cor: multiples, I guess} for a precise statement).
\end{rem}

\section{Notation and Preliminaries}


\subsection*{Groups with bi-invariant metrics:} Throughout we consider $G$ to be a countable group. Let $\text{Sym}_n$ denote the finite symmetric group of rank $n$. Recall the normalized  Hamming distance which is a bi-invariant metric on $\text{Sym}_n$: $$d_n(\sigma, \rho)= \frac{|\{i\ |\ \sigma(i)\neq \rho(i)\}|}{n}.$$ Recall the following:

\begin{defn}
    A sequence of maps $\sigma_{n}\colon G\to \Sym_n$ is said to be an \emph{approximate homomorphism} if for all $g,h\in G$ we have $$\lim_{n\to \infty} d_n(\sigma_n(gh), \sigma_n(g)\sigma_n(h))=0.$$
\end{defn}

\begin{defn}
    A sequence of maps $\sigma_{n}\colon G\to \Sym_n $ is said to be a \emph{sofic approximation} if  $(\sigma_n)_{n}$ is an approximate homomorphism  and for
    all $g\in G\setminus\{e\}$ we have $$\lim_{n\to \infty} d_n(\sigma_n(g), 1_n)=1.$$
\end{defn}

For the results in Section 3 we will need the notion of ultraproducts of groups with bi-invariant metrics. Let $\omega$ be a free ultrafilter on $\mathbb{N}$. Let $(G_n,d_n)$ be countable groups with bi-invariant metrics. Denote by $$\prod_{n\to \omega} (G_n,d_n)= \frac{\{(g_n)_{n\in \mathbb{N}}\}}{\{(g_n)| \lim_{n\to \omega} d_n(g_n,1_{G_n})=0\}}. $$ Observe that, by the bi-invariance property of the metrics $d_n$, the subgroup   $\{(g_n)| \lim_{n\to \omega} d_n(g_n,1_n)=0\}$ is indeed a normal subgroup.

\subsection*{Tracial von Neumann algebras:} Let $\cH$ be a Hilbert space. Recall that a unital $*$-subalgebra $M$ of $\mathbb{B}(\mathcal{H})$ is said to be a \emph{von Neumann algebra} if it is closed in the weak operator topology given by the convergence  $T_n\to T$ if $\langle (T_n-T)\xi, \eta\rangle\to 0$ for all $\xi,\eta\in \mathcal{H}$. A  \emph{normal homomorphism} between von Neumann algebras $M,N$ is a linear $\Theta\colon M\to N$ which preserves products and adjoints and such that $\Theta\big|_{\{x\in M:\|x\|\leq 1\}}$ is weak operator topology continuous. Such maps are automatically norm continuous \cite[Proposition 1.7 (e)]{ConwayOT}.
We say that $\Theta$ is an \emph{isomorphism} if it is bijective, it is then automatic that $\Theta^{-1}$ is a normal homomorphism \cite[Proposition 46.6]{ConwayOT}.
A pair $(M,\tau)$ is a tracial von Neumann if $M$ is a von Neumann algebra, and $\tau$ is a \emph{trace}, meaning that $\tau: M\to \bC$,  is a  positive linear functional such that $\tau(ab)=\tau(ba)$ and $\tau(1)=1$, and so that $\tau\big|_{\{x\in M:\|x\|\leq \}}$ is weak operator topology continuous.
Given a Hilbert space $\cH$ and $E\subseteq B(\cH)$, we let $W^{*}(E)$ be the von Neumann algebra generated by $E$.

We need the following folklore result (see the discussion in \cite[Section 2]{BekkaOAsuperrigid} for a proof).
\begin{cor}\label{lemm:Characters give conjugacy}
Let $G$ be a group,  and $(M_{j},\tau_{j}),j=1,2$ tracial von Neumann algebras. Suppose that for $j=1,2,$ we have representations $\pi_{j}\colon G\to \mathcal{U}(M_{j})$. If $\tau_{1}\circ \pi_{1}=\tau_{2}\circ \pi_{2}$, then there is a unique normal isomorphism $\Theta\colon W^{*}(\pi_{1}(G))\to W^{*}(\pi_{2}(G))$ with $\Theta\circ \pi_{1}=\pi_{2}$.
\end{cor}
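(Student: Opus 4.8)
The plan is to package the statement as the uniqueness assertion of the Gelfand--Naimark--Segal (GNS) construction. Set $\varphi := \tau_1\circ\pi_1 = \tau_2\circ\pi_2\colon G\to\bC$. Extending each $\pi_j$ linearly to a unital $*$-homomorphism $\bC[G]\to M_j$ (possible since $\pi_j(G)\subseteq\mathcal{U}(M_j)$), one sees that the linear extension $\widetilde\varphi\colon\bC[G]\to\bC$ of $\varphi$ is positive, since $\widetilde\varphi(x^*x)=\tau_j(\pi_j(x)^*\pi_j(x))\ge 0$ by positivity of $\tau_j$, and tracial, since $\varphi(gh)=\tau_j(\pi_j(g)\pi_j(h))=\tau_j(\pi_j(h)\pi_j(g))=\varphi(hg)$; in particular $\varphi$ is a positive-definite function on $G$. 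The hypothesis says precisely that this same $\widetilde\varphi$ is produced by $j=1$ and by $j=2$.

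First I would build the cyclic data on each side. Work inside $L^2(M_j,\tau_j)$ with its cyclic trace vector $\widehat{1}_j$; then $g\mapsto\pi_j(g)$ is a unitary representation of $G$, the closed subspace $H_j:=\overline{\pi_j(\bC[G])\widehat{1}_j}$ is invariant under $\pi_j(\bC[G])$ and hence under its weak-operator closure $W^*(\pi_j(G))$, and since $\pi_j(\bC[G])$ is strong-operator dense in $W^*(\pi_j(G))$ (Kaplansky density) we get $H_j=\overline{W^*(\pi_j(G))\widehat{1}_j}$, i.e.\ $H_j$ is the GNS Hilbert space of $(W^*(\pi_j(G)),\tau_j)$ with cyclic vector $\widehat{1}_j$. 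The diagonal matrix coefficient is $\langle\pi_j(g)\widehat{1}_j,\widehat{1}_j\rangle=\tau_j(\pi_j(g))=\varphi(g)$, independent of $j$. By uniqueness of the GNS construction associated to $\varphi$, there is a unitary $U\colon H_1\to H_2$ with $U\widehat{1}_1=\widehat{1}_2$ and $U\pi_1(g)U^*=\pi_2(g)$ for every $g\in G$.

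It then remains to transport $U$ through the two GNS pictures. Using that $\tau_j$ is faithful on $W^*(\pi_j(G))$, the map $x\mapsto x|_{H_j}$ is an injective normal $*$-homomorphism of $W^*(\pi_j(G))$ onto $W^*(\{\pi_j(g)|_{H_j}:g\in G\})\subseteq\bB(H_j)$ (injectivity: $x|_{H_j}=0$ forces $x\widehat{1}_j=0$, so $\tau_j(x^*x)=0$, so $x=0$), and I would identify $W^*(\pi_j(G))$ with this image. Then $\Ad(U)\colon\bB(H_1)\to\bB(H_2)$ is a normal $*$-isomorphism carrying the generators $\pi_1(g)$ to $\pi_2(g)$, hence restricting to a normal $*$-isomorphism $\Theta\colon W^*(\pi_1(G))\to W^*(\pi_2(G))$ with $\Theta\circ\pi_1=\pi_2$. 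For uniqueness: a normal homomorphism is weak-operator continuous on the unit ball, so two such maps that agree on $\pi_1(G)$ agree on the $*$-algebra it generates, whose unit ball is weak-operator dense in that of $W^*(\pi_1(G))$ by Kaplansky density; hence they coincide.

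I do not anticipate a real obstacle here: the statement is folklore and the argument is just the standard GNS-uniqueness bookkeeping. The one point meriting attention is the use of faithfulness of $\tau_j$ on $W^*(\pi_j(G))$ in the last step --- without it the two algebras need not even be abstractly isomorphic --- which is harmless given the standing convention for tracial von Neumann algebras and the fact that in all applications of this corollary the ambient trace is faithful; beyond that, only routine care with Kaplansky density and normality is required.
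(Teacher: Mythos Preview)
Your argument is correct and is precisely the standard GNS-uniqueness bookkeeping one expects for this folklore statement; the paper does not supply its own proof but simply refers to the discussion in \cite[Section 2]{BekkaOAsuperrigid}, which proceeds along the same lines. Your remark about needing faithfulness of $\tau_j$ is well taken---the paper's stated definition omits it, but the traces actually used (ultraproduct traces on $\mathcal{M}$ and its subalgebras) are faithful, so no issue arises in the applications.
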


We also need the notion of ultraproducts of tracial von Neumann algebras: Let $\omega$ be a free ultrafilter on $\mathbb{N}$. Let $(N_k, \tau_k)$ denote a sequence of tracial von Neumann algebras.  Denote the ultraproduct by $$\prod_{k\to \omega} (N_k,\tau_k)= \{(x_k)_{k\in \mathbb{N}}\ |\ \sup_{k} \|x_k\|< \infty\}/\{(x_k)| \lim_{k\to \omega} \|x_k\|_2=0\}. $$
If $(x_{k})_{k}\in \prod_{k}N_{k}$ with
 $\sup_{k}\|x_{k}\|<+\infty,$ we use $(x_{k})_{k\to\omega}$ for its image in $\prod_{k\to\omega}(N_{k},\tau_{k})$.
By the proof of \cite[Lemma A.9]{BrownOzawa2008}
the ultraproduct is a tracial von Neumann algebra and is equipped with a canonical trace $\tau((x_n)_{n\to \omega})= \lim_{n\to \omega}\tau_n(x_n)$.

\section{Proof of the main result}\label{sec:proof of main}
Let $\omega\in \beta(\mathbb{N})\setminus \mathbb{N}$ be a non principal ultrafilter on $\mathbb{N}$. Denote by $\mathcal{S}:=\prod_{n\to \omega}(\text{Sym}_{n}, d_n)$, where $\text{Sym}_n$ denote the symmetric groups on $n$ points. We call $\cS$ a \emph{universal sofic group}. Denote by $\chi$ the trace on $\mathcal{S}$, given by $\chi((p_n)_{n\to\omega})= 1-\lim_{n\to \omega} d_{n}(1,p_n)$ where  we recall that $d_n$ is the normalized Hamming metric on $\text{Sym}$. For an $n\in \bN$ define $\tr\colon M_{n}(\bC)\to \bC$ by
\[\tr(A)=\frac{1}{n}\sum_{j=1}^{n}A_{jj}.\]
Set $\cM:=\prod_{n\to \omega}(\mathbb{M}_n(\mathbb{C}),\tr)$, and let $\tau_{\mathcal{M}}$ be the trace on $\mathcal{M}$. By identifying each permutation with the corresponding permutation matrix we get an embedding $\mathcal{S}\leq \mathcal{U}(\prod_{n\to \omega}\mathbb{M}_n(\mathbb{C})):=\mathcal{M}$.  We let $L^{2}(\mathcal{M}),L^{2}(W^{*}(\mathcal{S}))$ be the Hilbert space completions of $\mathcal{M}, W^{*}(\mathcal{S})$ under the inner product
\[\ip{x,y}=\tau_{\mathcal{M}}(y^{*}x)\]
the inclusion $W^{*}(\mathcal{S})\hookrightarrow \mathcal{M}$ naturally extends to an isometry $L^{2}(W^{*}(\mathcal{S}))\hookrightarrow L^{2}(\mathcal{M})$ and because of this we will identify $L^{2}(W^{*}(\mathcal{S}))$ with a subspace of  $L^{2}(\mathcal{M})$. Observe that $\chi=\tau_{\mathcal{M}}|_{\mathcal{S}}$.

We will need the following fact that shows that automorphisms of the universal sofic group $\mathcal{S}$ extend nicely to the von Neumann algebra $W^*(\mathcal{S})$ generated by $\mathcal{S}$.

\begin{lemm}\label{lemm: extending to an aut of the vNa}
For any $\Phi\in Aut(\cS)$, there exists a unique trace preserving $*$-automorphism $\Phi_{*}:W^*(\cS)\to W^*(\cS)$ such that $\Phi_{*}|_{\cS}= \Phi$, where $W^*(\mathcal{S})$ denotes the von Neumann algebra generated by $\mathcal{S}$.
\end{lemm}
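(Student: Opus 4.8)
The plan is to derive the lemma from the character rigidity statement in Corollary~\ref{lemm:Characters give conjugacy}, applied to the group $\mathcal{S}$ itself. Concretely, I would take $M_{1}=M_{2}=W^{*}(\mathcal{S})$, each equipped with the trace $\tau_{\mathcal{M}}|_{W^{*}(\mathcal{S})}$ (which restricts to $\chi$ on $\mathcal{S}$); let $\pi_{1}\colon \mathcal{S}\hookrightarrow \mathcal{U}(W^{*}(\mathcal{S}))$ be the tautological inclusion; and let $\pi_{2}=\pi_{1}\circ\Phi$. Since $\Phi$ is an automorphism we have $\pi_{1}(\mathcal{S})=\pi_{2}(\mathcal{S})=\mathcal{S}$, hence $W^{*}(\pi_{1}(\mathcal{S}))=W^{*}(\pi_{2}(\mathcal{S}))=W^{*}(\mathcal{S})$. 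So, once the hypothesis of the corollary is verified, it produces a unique normal isomorphism $\Theta\colon W^{*}(\mathcal{S})\to W^{*}(\mathcal{S})$ with $\Theta\circ\pi_{1}=\pi_{2}$, that is $\Theta|_{\mathcal{S}}=\Phi$, and we set $\Phi_{*}:=\Theta$.

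The hypothesis to check is exactly that $\chi\circ\Phi=\chi$ on $\mathcal{S}$, i.e. that every automorphism of $\mathcal{S}$ preserves the canonical trace, and this is the one real input. It is here that Paunescu's theorem \cite{paunescuaut} that every element of $\Aut(\mathcal{S})$ is pointwise inner is used: given $g\in\mathcal{S}$ there is $h\in\mathcal{S}$ (depending on $g$) with $\Phi(g)=hgh^{-1}$. Since each $d_{n}$ is bi-invariant, $d_{n}(1,\cdot)$ is conjugation invariant --- writing $d_{n}(1,h_{n}p_{n}h_{n}^{-1})=d_{n}(h_{n}^{-1},p_{n}h_{n}^{-1})=d_{n}(1,p_{n})$ using left- then right-invariance --- and hence so is $\chi$ (equivalently, $\chi=\tau_{\mathcal{M}}|_{\mathcal{S}}$ and $\tau_{\mathcal{M}}$ is a trace, so conjugation invariant a fortiori). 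Therefore $\chi(\Phi(g))=\chi(hgh^{-1})=\chi(g)$ for every $g$. I do not expect any obstacle beyond correctly invoking this; everything else is soft, and indeed the whole point of isolating Corollary~\ref{lemm:Characters give conjugacy} and Paunescu's theorem is to reduce the lemma to this identity.

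Finally I would check that $\Phi_{*}$ is trace preserving and unique. For trace preservation, $\tau_{\mathcal{M}}\circ\Phi_{*}$ and $\tau_{\mathcal{M}}$ are both normal traces on $W^{*}(\mathcal{S})$ that agree on $\mathcal{S}$, hence on the $*$-subalgebra $\Span(\mathcal{S})$ it generates (which is a $*$-algebra because $\mathcal{S}$ is a group), and $\Span(\mathcal{S})$ is weak operator topology dense in $W^{*}(\mathcal{S})$ by the bicommutant theorem; so by the Kaplansky density theorem together with normality the two traces agree on all of $W^{*}(\mathcal{S})$. For uniqueness, any trace preserving $*$-homomorphism $\Psi\colon W^{*}(\mathcal{S})\to W^{*}(\mathcal{S})$ with $\Psi|_{\mathcal{S}}=\Phi$ agrees with $\Phi_{*}$ on $\Span(\mathcal{S})$ by linearity; since a trace preserving $*$-homomorphism between tracial von Neumann algebras is $\|\cdot\|_{2}$-isometric on the unit ball, and $\Span(\mathcal{S})$ is $\|\cdot\|_{2}$-dense there, we conclude $\Psi=\Phi_{*}$. (One may alternatively quote the uniqueness clause already contained in Corollary~\ref{lemm:Characters give conjugacy}, after separately noting that a trace preserving $*$-homomorphism between tracial von Neumann algebras is automatically normal, since it extends to an isometry of the associated $L^{2}$-spaces.)
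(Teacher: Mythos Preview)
Your proposal is correct and follows exactly the same route as the paper: invoke P\u{a}unescu's pointwise-inner theorem to get $\chi\circ\Phi=\chi$, then feed this into Corollary~\ref{lemm:Characters give conjugacy} with $\pi_{1}$ the inclusion and $\pi_{2}=\pi_{1}\circ\Phi$. The paper's proof is a two-line version of what you wrote; your added verification of trace preservation and uniqueness is more careful than the paper but not different in substance.
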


\begin{proof}
From the main result of \cite{paunescuaut} for any element $x\in \mathcal{S}$, there exists $y\in \cS$ satisfying $\Phi(x)=yxy^{-1}$, therefore $\chi(x)=\chi(\Phi(x))$. The conclusion then follows from Corollary \ref{lemm:Characters give conjugacy}.

\end{proof}

Denote by $L^{\infty}(\mathcal{L})$ the Loeb algebra, that is the ultraproduct of the diagonal subalgebras $\prod_{n\to \omega}D_n(\mathbb{C})$ in $\mathcal{M}$. The notation here is justified because there is a point realization of $\prod_{n\to \omega}D_n(\mathbb{C})$. Namely, there is an underlying probability measure space $(\cL,\mu)$ so that $\prod_{n\to \omega}D_n(\mathbb{C})$ is naturally isomorphic to $L^{\infty}$ functions on $(\cL,\mu)$. The actual space is inconsequential for our results; we just need the algebra of functions on the space. For this reason, we do not give the  construction of the space and instead refer the reader to \cite{Loebmeasure} for its definition. We remark that we may view $\cS$ itself in von Neumann algebraic terms. In fact, by arguments similar to \cite[Remark 3.4]{PopaMaxAbelian} one can show that $\cS$ is the normalizer of $L^{\infty}(\cL)$ inside $\cM$ (e.g. see the discussion following Theorem 0.3 in \cite{PoppArg}). In this framework, our consideration of $W^{*}(\cS)$ is similar to the notion of approximate conjugacy in \cite{APApproxEquiv}.

The following lemma is the main  place  where we use von Neumann algebra  closures in the proof of our first main result.  

\begin{lemm}\label{Loeb measure space is in the von Neumann algebra generated by S}
For any projection $p\in L^\infty(\mathcal{L})$ there exists an element $\psi\in \cS$ such that $\psi^n\to p$ in the weak operator topology. In particular, $L^\infty(\mathcal{L})\subset W^*(\mathcal{S})$.

\end{lemm}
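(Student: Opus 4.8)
The plan is to reduce the general case to the case of a single projection (indicator function) in $L^\infty(\mathcal{L})$, and then to realize that projection as a weak-operator limit of powers of a single permutation in $\mathcal{S}$. First I would observe that it suffices to treat $p$ a projection: indeed, finite $\mathbb{C}$-linear combinations of projections are norm-dense in $L^\infty(\mathcal{L})$ (any bounded measurable function is a norm limit of simple functions), and $W^*(\mathcal{S})$ is norm-closed, so if every projection in $L^\infty(\mathcal{L})$ lies in $W^*(\mathcal{S})$ then all of $L^\infty(\mathcal{L})$ does. Moreover, for the "in particular" it is enough to get containment in $W^*(\mathcal{S})$; the statement that each projection is a WOT-limit of powers $\psi^n$ of a \emph{single} $\psi\in\mathcal{S}$ is the stronger claim I would aim for directly, since it immediately gives the containment.

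So fix a projection $p\in L^\infty(\mathcal{L})$, represented by $(p_n)_{n\to\omega}$ with each $p_n\in D_n(\mathbb{C})$ a diagonal $0/1$-matrix; write $k_n = \tr(p_n) \cdot n = |\{j : (p_n)_{jj} = 1\}|$, so $k_n/n \to \tau_{\mathcal{M}}(p) =: t$ along $\omega$. The idea is to build, for each $n$, a permutation $\psi_n\in\Sym_n$ that acts as a single long cycle (or a product of a few long cycles) on roughly the support of $p_n$ and on its complement in a coordinated way, so that the sequence of powers of $\psi := (\psi_n)_{n\to\omega}$ averages to $p$. Concretely, here is the cleanest implementation I would use: relabel coordinates so that $\supp(p_n) = \{1,\dots,k_n\}$. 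Choose $\psi_n$ to be an $n$-cycle of a special form — for instance, pick integers $a_n, b_n$ with $a_n + b_n = n$, $a_n \approx n/q$ arranged so that $\{1,\dots,k_n\}$ is exactly a union of residue classes modulo some period. Actually the slickest route: approximate $t$ by rationals. Given $\varepsilon > 0$, pick a rational $r/q$ with $|t - r/q| < \varepsilon$. For each $n$ large, partition $\{1,\dots,n\}$ into $\lfloor n/q \rfloor$ blocks of size $q$ (plus a negligible remainder of size $< q$), arrange $p_n$ (up to a Hamming-small perturbation, which does not change its class in the ultraproduct once $q$ is fixed and $n\to\omega$ — wait, a perturbation of $o(n)$ coordinates is what we can afford) so that within each block exactly the first $r$ coordinates are in the support. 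Let $\psi_n$ be the permutation that cyclically rotates each block of size $q$. Then $\frac{1}{q}\sum_{j=0}^{q-1}\psi_n^j$ equals, on each block, the rank-one projection onto the constant vector, whereas what we want is the diagonal projection $p_n$; these are not equal, so a pure Cesàro average of powers is the wrong target.

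Let me restructure. The right statement to prove is just $L^\infty(\mathcal{L}) \subseteq W^*(\mathcal{S})$ via WOT-closure, so I do not need $p$ itself to be a limit of powers $\psi^n$; I need \emph{some} net/sequence in the $*$-algebra generated by $\mathcal{S}$ converging WOT to $p$ — but the lemma explicitly asks for $\psi^n \to p$. The honest approach: take $\psi_n$ to be a single $n$-cycle chosen so that its support-indicator of the first $k_n$ iterates' ... no. The cleanest correct construction: let $\psi_n$ be an $n$-cycle; then $\psi_n, \psi_n^2, \dots$ runs through all powers, and the \emph{WOT-closure of $\{\psi_n^m : m\}$ inside $\mathbb{M}_n$} is all of $D_n$-conjugates... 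Instead I would use: the abelian von Neumann algebra $\prod_{n\to\omega} D_n$ is generated, as a von Neumann algebra, by the image of $\mathcal{S}$ because a diagonal projection $p_n$ is a WOT-limit in $\mathbb{M}_n$ of Cesàro averages $\frac{1}{N}\sum_{m=1}^N \sigma_n^m$ where $\sigma_n$ is a permutation having $p_n$ as its "limiting orbit-average", e.g. $\sigma_n = \prod (\text{fixed points on } \supp p_n) \cdot (\text{one big cycle on the rest})$ — a permutation that fixes $\supp(p_n)$ pointwise and is a single cycle on the complement has power-averages converging WOT to $p_n \oplus \frac{1}{n-k_n}J$, and then projecting/taking a further limit kills the rank-one block. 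Since we only need WOT-closure of $\operatorname{span}\mathcal{S}$, the rank-one correction terms $\frac{1}{n-k_n}J_n$ have operator norm $\frac{1}{n-k_n} \to 0$ hence go to $0$ WOT, so $p = \text{WOT-}\lim_{N} \big(\text{WOT-}\lim_{n\to\omega} \frac{1}{N}\sum_m \sigma_n^m\big)$ lies in the WOT-closure of $\operatorname{span}\mathcal{S}$. I would then cite that $W^*(\mathcal{S})$ is exactly that WOT-closure. The main obstacle is precisely getting the statement in the exact form "$\psi^n \to p$ for a single $\psi\in\mathcal{S}$": this requires choosing $\psi_n$ so that the sequence of \emph{powers} (not averages) already converges, which forces $\psi_n$ to have all its nontrivial cycle lengths tending to $\infty$ along a carefully chosen subsequence — e.g., $\psi_n$ a product of two cycles whose lengths are in ratio $t:(1-t)$ and such that along $\omega$ one can extract $n$ with $\psi_n^{m(n)}$ hitting the desired pattern; alternatively one takes $\psi$ to be a fixed element whose powers are weakly dense in the abelian algebra it generates (a "quasi-rotation"), which exists by a Rokhlin-type / diophantine argument. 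I expect this diophantine bookkeeping — selecting cycle lengths and a scalar so the orbit of $\psi$ under taking powers is WOT-dense enough to capture an arbitrary prescribed projection, uniformly enough to survive the ultralimit — to be the technical heart of the proof; the reduction to projections and the passage from $\operatorname{span}\mathcal{S}$ to $W^*(\mathcal{S})$ are routine.
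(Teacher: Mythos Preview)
You actually write down the correct element $\psi$ midway through your proposal --- the permutation that fixes $\supp(p_n)$ pointwise and is a single long cycle on the complement $E_n^c$ --- but then miss the key observation that makes the lemma immediate, and instead veer off into Ces\`aro averages and an imagined diophantine obstacle.

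The point you are missing is this: with that choice of $\psi_n$, the ultraproduct element $\psi=(\psi_n)_{n\to\omega}$ decomposes as $\psi=p+\widehat{\psi}$, where $\widehat{\psi}\in\mathcal{U}((1-p)\mathcal{M}(1-p))$ is the image of the cycle on $E_n^c$. Because the cycle is a sofic approximation of $\mathbb{Z}$, the element $\widehat{\psi}$ has the same moments as the generator of the left regular representation $\lambda\colon\mathbb{Z}\to\mathcal{U}(\ell^2(\mathbb{Z}))$; by Corollary~\ref{lemm:Characters give conjugacy} there is a normal embedding $W^*(\lambda(\mathbb{Z}))\hookrightarrow (1-p)\mathcal{M}(1-p)$ sending $\lambda(1)\mapsto\widehat{\psi}$. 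Since $\lambda(k)\to 0$ in WOT on $\ell^2(\mathbb{Z})$ as $k\to\infty$, normality gives $\widehat{\psi}^k\to 0$ in WOT. Hence
\[
\psi^k=(p+\widehat{\psi})^k=p+\widehat{\psi}^k\xrightarrow{\ \text{WOT}\ }p,
\]
using $p\widehat{\psi}=\widehat{\psi}p=0$. That is the whole argument; no averaging and no diophantine bookkeeping is required.

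Two smaller issues: your claim that $\frac{1}{n-k_n}J_{n-k_n}$ has operator norm $\frac{1}{n-k_n}$ is false --- it is a rank-one projection, so has operator norm $1$. (It does vanish in the ultraproduct, but for the reason that its normalized Hilbert--Schmidt norm is $n^{-1/2}$.) And your framing suggests that recovering ``$\psi^k\to p$ for a single $\psi$'' is harder than the containment $L^\infty(\mathcal{L})\subseteq W^*(\mathcal{S})$; in fact the single-element statement is exactly what the simple construction above delivers.
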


\begin{proof}
By \cite[Lemma 5.4.2 (i)]{AP} we may find a sequence $E_{n}\subseteq \{1,\cdots,n\}$ so that $p=(1_{E_n})_{n\to\omega}$. Note that if $\lim_{n\to\omega}\frac{|E_{n}^{c}|}{n}=0$, then $p=1$, and we can simply chose $\psi=1$. So we may, and will, assume that
\[\lim_{n\to\omega}\frac{|E_{n}^{c}|}{n}>0\]
and thus that $p\ne 1$.
Choose a sofic approximation $\sigma_{n}\colon \bZ\to \Sym(E_{n}^{c}).$ E.g. we can construct $\sigma_{n}$ by letting $\phi\in \Sym(E_{n}^{c})$ have order $|E_{n}^{c}|$ and setting $\sigma_{n}(k)=\phi^{k}$.
Define $\hat{\psi}=(\sigma_{n}(1))_{n\to\omega} \in  \prod_{n\to \omega}\Sym(E_{n}^{c})$.
Regard $\hat{\psi}\in \mathcal{U}((1-p)\cM(1-p))$, and equip $(1-p)\cM(1-p)$ with the trace $\tau_{1-p}(x)=\frac{1}{\tau(1-p)}\tau(x)$
(this trace is well defined as $1-p\ne 0$).
Then, for every $k\in \bZ$:
\[\tau_{1-p}(\widehat{\psi}^{k})=\lim_{n\to\omega}\frac{1}{|E_{n}^{c}|}|\{j\in E_{n}^{c}:\sigma_{n}(k)(j)\ne j\}|=\lim_{n\to\omega}d_{\Hamm}(\sigma_{n}(1)^{k},1)=\delta_{k=0}=\ip{\lambda(k)\delta_{0},\delta_{0}},\]
where $\lambda\colon \bZ\to \mathcal{U}(\ell^{2}(\bZ))$ is the left regular representation.
By Corollary \ref{lemm:Characters give conjugacy},
this implies that we have an injective, normal $*$-homomorphism
\[\Theta\colon W^{*}(\lambda(\bZ))\to \mathcal{U}((1-p)\cM(1-p))\]
satisfying $\Theta(\lambda(k))=\widehat{\psi}^{k}$. Since normal $*$-homomorphisms are WOT continuous on the unit ball,
\[WOT-\lim_{k\to\infty}\widehat{\psi}^{k}=\Theta(WOT-\lim_{k\to\infty}\lambda(k))=0.\]
Set $\psi_n=\text{id}_{E_n}\bigsqcup \sigma_{n}(1)$,
and $\psi=(\psi_n)_{n\to\omega}=p+\widehat{\psi}$. Note that $\widehat{\psi}\in \mathcal{U}((1-p)\cM(1-p))$ and thus $\widehat{\psi}p=0=\widehat{\psi}p$.
We now compute:
\[WOT-\lim_{n\to\infty}\psi^{k}=WOT-\lim_{n\to\infty}p+\widehat{\psi}^{k}=p,\]
as required.
Being a von Neumann algebra, $L^{\infty}(\cL)$ is the norm closure of the linear span of its projections \cite[Proposition 13.3 (i)]{ConwayOT}, proving the ``in particular" part.
\end{proof}

We  have moreover that any automorphism of $\mathcal{S}$ preserves $L^{\infty}$-functions on the Loeb measure space.

\begin{lemm}\label{auts fix the Loeb measure space}
For any $\Phi\in Aut(\mathcal{S})$ we have that $\Phi_{*}(L^\infty(\mathcal{L}))= L^\infty(\mathcal{L})$. Moreover, $\Phi_{*}$ preserves the trace on $L^{\infty}(\mathcal{L})$.
\end{lemm}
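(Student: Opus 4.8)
The plan is to characterize $L^\infty(\mathcal{L})$ internally, in terms of the universal sofic group $\mathcal{S}$ and its von Neumann algebra, in a way that is manifestly preserved by $\Phi_*$. The natural candidate is: $L^\infty(\mathcal{L})$ is (the $*$-algebra generated by) the set of projections in $W^*(\mathcal{S})$ that arise as WOT-limits of powers $\psi^n$ of elements $\psi \in \mathcal{S}$ — which is exactly the content of Lemma \ref{Loeb measure space is in the von Neumann algebra generated by S}. More precisely, I would first show that $L^\infty(\mathcal{L})$ coincides with the WOT-closed linear span (equivalently the von Neumann algebra generated by) the set
\[
P = \{\, q \in W^*(\mathcal{S}) : q = q^* = q^2,\ \exists\, \psi \in \mathcal{S} \text{ with } \psi^n \to q \text{ in WOT} \,\}.
\]
One inclusion is Lemma \ref{Loeb measure space is in the von Neumann algebra generated by S}: every $p \in L^\infty(\mathcal{L})$ that is a projection lies in $P$, and these generate $L^\infty(\mathcal{L})$ as a von Neumann algebra since a von Neumann algebra is the WOT-closure of the span of its projections. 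For the reverse inclusion I would argue that if $\psi = (\psi_n)_{n\to\omega} \in \mathcal{S}$ and $\psi^{n} \to q$ in WOT with $q$ a projection, then $q \in L^\infty(\mathcal{L})$: passing to the abelian von Neumann algebra $W^*(\psi) \subseteq \mathcal{M}$, each $\psi_n$ is conjugate in $\Sym_n$ to $\mathrm{id}_{F_n} \sqcup \rho_n$ where $\rho_n$ is a permutation with no fixed points on its support and $F_n = \Fix(\psi_n)$; the WOT-limit of $\psi^n$ inside $W^*(\psi)$ is then the spectral projection onto the fixed-point set, i.e. $(1_{F_n})_{n\to\omega}$, which lies in $\prod_{n\to\omega} D_n(\mathbb{C}) = L^\infty(\mathcal{L})$ after the conjugation; since the conjugating element is in $\mathcal{S}$ and $\mathcal{S}$ normalizes $L^\infty(\mathcal{L})$ (permutation matrices conjugate diagonal matrices to diagonal matrices), we get $q \in L^\infty(\mathcal{L})$.

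With this internal description in hand, the lemma is essentially immediate. Given $\Phi \in \Aut(\mathcal{S})$, Lemma \ref{lemm: extending to an aut of the vNa} gives the trace-preserving $*$-homomorphism $\Phi_* : W^*(\mathcal{S}) \to W^*(\mathcal{S})$ extending $\Phi$. Since $\Phi_*$ is a normal $*$-homomorphism, it is WOT-continuous on bounded sets, so it carries projections to projections and intertwines WOT-limits: if $\psi \in \mathcal{S}$ and $\psi^n \to q$ in WOT, then $\Phi(\psi) \in \mathcal{S}$ and $\Phi(\psi)^n = \Phi_*(\psi^n) \to \Phi_*(q)$ in WOT, so $\Phi_*(q) \in P$. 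Hence $\Phi_*(P) \subseteq P$, and therefore $\Phi_*$ maps the von Neumann algebra generated by $P$ into itself, i.e. $\Phi_*(L^\infty(\mathcal{L})) \subseteq L^\infty(\mathcal{L})$. Applying the same argument to $\Phi^{-1} \in \Aut(\mathcal{S})$ gives $(\Phi^{-1})_*(L^\infty(\mathcal{L})) \subseteq L^\infty(\mathcal{L})$; since $(\Phi^{-1})_* = (\Phi_*)^{-1}$ on $W^*(\mathcal{S})$ by the uniqueness clause of Lemma \ref{lemm: extending to an aut of the vNa} (the composition $(\Phi^{-1})_* \circ \Phi_*$ restricts to the identity on $\mathcal{S}$, hence is the identity on $W^*(\mathcal{S})$), we conclude $\Phi_*(L^\infty(\mathcal{L})) = L^\infty(\mathcal{L})$.

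The main obstacle I anticipate is the reverse inclusion in the internal characterization of $L^\infty(\mathcal{L})$ — specifically, checking carefully that for an arbitrary $\psi \in \mathcal{S}$ whose powers converge in WOT to a projection $q$, that projection really does land in the Loeb algebra rather than merely in $W^*(\mathcal{S})$. This requires being a little careful about conjugating the sequence $(\psi_n)$ into a normal form and verifying that the WOT-limit is computed correctly inside the abelian algebra $W^*(\psi)$; the computation in the proof of Lemma \ref{Loeb measure space is in the von Neumann algebra generated by S} (where $\tau_{1-p}(\widehat\psi^k) = \delta_{k=0}$ forces $\widehat\psi^k \to 0$) is exactly the tool needed, now run in the opposite direction. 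An alternative route that sidesteps this is to characterize $L^\infty(\mathcal{L})$ instead as the WOT-closed algebra generated by $\{\,$projections $q \in W^*(\mathcal{S}) : \chi(q) = \tau_{\mathcal{M}}(q)$ and $q$ commutes with a copy of $\prod_{n\to\omega}\Sym(E_n)\,\}$ or similar, but the WOT-limit description above is the cleanest and aligns directly with what Lemma \ref{Loeb measure space is in the von Neumann algebra generated by S} already establishes.
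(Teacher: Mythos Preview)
Your approach is genuinely different from the paper's, and the difference is worth pointing out. The paper does \emph{not} try to characterize $L^\infty(\mathcal{L})$ intrinsically. Instead it invokes P\u{a}unescu's pointwise-inner theorem a second time: given a projection $p\in L^\infty(\mathcal{L})$, pick $\psi\in\mathcal{S}$ with $\psi^n\to p$ in WOT (Lemma~\ref{Loeb measure space is in the von Neumann algebra generated by S}), choose $\phi\in\mathcal{S}$ with $\Phi(\psi)=\phi\psi\phi^{-1}$, and then compute directly
\[
\Phi_*(p)=\text{WOT-}\lim_n \Phi(\psi)^n=\phi\bigl(\text{WOT-}\lim_n\psi^n\bigr)\phi^{-1}=\phi p\phi^{-1}\in L^\infty(\mathcal{L}),
\]
since $\mathcal{S}$ normalizes $L^\infty(\mathcal{L})$. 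This sidesteps the reverse inclusion entirely. Your route, by contrast, would (if completed) give an automorphism-invariant description of $L^\infty(\mathcal{L})$ inside $W^*(\mathcal{S})$ that does not re-invoke pointwise innerness; the price is that the reverse inclusion becomes the whole content.

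That reverse inclusion is where your sketch is too quick. Writing $\psi_n=\id_{F_n}\sqcup\rho_n$ requires no conjugation (it is already in that form), but the assertion ``the WOT-limit of $\psi^k$ is the fixed-point projection $(1_{F_n})_{n\to\omega}$'' is exactly what needs proof. For a general $\psi\in\mathcal{S}$ the spectral measure of $\psi$ with respect to $\tau$ is $\sum_{m\geq1}\alpha_m\nu_m+\beta\,\text{Leb}$, where $\alpha_m$ is the asymptotic proportion of points in $m$-cycles, $\nu_m$ is uniform on the $m$-th roots of unity, and $\beta=1-\sum_m\alpha_m$. If $\alpha_m>0$ for some $m\geq2$ then $\psi$ has a spectral atom at a nontrivial root of unity $\zeta$, and testing against $E_\psi(\{\zeta\})$ shows $\psi^k$ does not converge in WOT. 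Thus convergence forces $\alpha_m=0$ for all $m\geq2$; only then does $\hat\psi$ have Haar spectral measure on the corner, the argument of Lemma~\ref{Loeb measure space is in the von Neumann algebra generated by S} applies, and $q=p$. So your claim is true, but it rests on this cycle-structure analysis, not on the normal-form remark you gave. Once you fill this in, your proof is complete and independent of the paper's second use of P\u{a}unescu; the paper's argument is shorter precisely because it outsources that work to the pointwise-inner theorem.
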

\begin{proof}
The fact that $\Phi_{*}$ preserves the trace on $L^{\infty}(\cL)$ is contained in Lemma \ref{lemm: extending to an aut of the vNa}. So we focus on proving that $\Phi_{*}(L^{\infty}(\cL))\subseteq L^{\infty}(\cL)$.
Fix a projection $p\in L^\infty(\mathcal{L})$. From Lemma \ref{Loeb measure space is in the von Neumann algebra generated by S}, choose $\psi\in \mathcal{S}$ such that $\psi^n\to p$ weakly. Since $\Phi$ is pointwise inner by \cite{paunescuaut}, we see that $\Phi(\psi)=\phi\psi \phi^*$ for some $\phi\in \mathcal{S}$. Now we compute $\Phi_{*}(p)= \Phi_{*}(\lim_{n\to \infty}^{WOT}\psi^n) = \lim_{n\to \infty}^{WOT}\Phi(\psi^n)= \lim_{n\to \infty}^{WOT}\Phi(\psi)^n= \lim_{n\to \infty}^{WOT}\phi\psi^n\phi^*= \phi(\lim_{n\to \infty}^{WOT}\psi^n)\phi^*= \phi p \phi^*\in L^\infty(\mathcal{L})$ as required.
Since $L^{\infty}(\mathcal{L})$ is the norm closed linear span of its projections \cite[Proposition 13.3]{ConwayOT}, we have that $\Phi_{*}(L^{\infty}(\mathcal{L}))\subseteq L^{\infty}(\mathcal{L})$. Repeating the argument with $\Phi$ replaced by $\Phi^{-1}$ proves the opposite inclusion.
\end{proof}

Note that $\cS$ acts naturally on $L^{\infty}(\cL)$ by
\[\phi\cdot f=\phi f\phi^{-1}.\]
Observe that if $\phi=(\phi_{n})_{n\to\omega}\in \cS$ and $f=(f_{n})_{n\to\omega}\in L^{\infty}(\cL)$, then $\phi\cdot f=(\phi_{n} f_{n}\phi_{n}^{-1})_{n\to\omega}$. If we identify a diagonal matrix in $D\in M_{n}(\bC)$ with an element of $f\in \ell^{\infty}(n)$, then $\phi_{n} D \phi_{n}^{-1}$ corresponds to $f\circ \phi_{n}^{-1}$. So we can think of this action as being induced from the natural action of $\Sym_{n}$ on $\{1,\cdots,n\}$.
The last piece of the argument involves identifying a useful invariant of automorphic equivalence of embeddings, 
for which we need some terminology.
If $H$ is a subgroup of $\cS$, then we say that $H$ \emph{acts ergodically on $L^{\infty}(\cL)$} if
\[\{f\in L^{\infty}(\cL)\ |\ hfh^{-1}=f \textnormal{ for all $h\in H$}\}=\bC1.\]
Given a subgroup $H$ of $\cS$ we let
\[H'\cap \cS=\{\phi\in \cS\ |\ \phi h=h\phi \textnormal{ for all $h\in H$}\}.\]
We say that a sofic approximation $\sigma_{n}\colon G\to \Sym_{n}$ has \emph{ergodic commutant with respect to $\omega$} if $\sigma(G)'\cap \cS$ acts ergodically on $L^{\infty}(\cL)$, where $\sigma(g)=(\sigma_{n}(g))_{n\to\omega}$. We will often drop the ``with respect to $\omega$" if it is clear from context.

\begin{lemm}\label{main lemma Jung}
Let $\pi_1, \pi_2: G\to \mathcal{S}$ be two sofic embeddings of $G$ such that there exists $\Phi\in Aut(\cS)$ satisfying
$\pi_1= \Phi\circ\pi_2$. Then $\pi_1(G)'\cap \cS$ acts ergodically on $L^\infty(\mathcal{L})$ if and only if $\pi_2(G)'\cap \cS$ acts ergodically on $L^\infty(\mathcal{L})$.
\end{lemm}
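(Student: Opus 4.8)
The plan is to exploit the fact that $\Phi$ conjugates $\pi_2(G)$ onto $\pi_1(G)$ at the level of the group $\cS$, and then to transport this to the Loeb algebra using the extension $\Phi_{*}$ from Lemma~\ref{lemm: extending to an aut of the vNa} together with Lemma~\ref{auts fix the Loeb measure space}. First I would observe that, because $\Phi$ is a \emph{group} automorphism of $\cS$, an element $\phi\in\cS$ commutes with $\pi_2(g)$ for every $g\in G$ if and only if $\Phi(\phi)$ commutes with $\Phi(\pi_2(g))=\pi_1(g)$ for every $g\in G$; hence $\Phi$ restricts to a bijection $\pi_2'(G)\cap\cS\to\pi_1'(G)\cap\cS$. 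This is the one genuinely new ingredient needed; the rest is bookkeeping with structure already available.

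Next I would verify that $\Phi_{*}$ is invertible on $W^{*}(\cS)$ and restricts to a $*$-automorphism of $L^{\infty}(\cL)$. Applying Lemma~\ref{lemm: extending to an aut of the vNa} to both $\Phi$ and $\Phi^{-1}$ yields trace preserving $*$-homomorphisms $\Phi_{*}$ and $(\Phi^{-1})_{*}$ of $W^{*}(\cS)$; their two composites restrict to $\id$ on $\cS$, so by the uniqueness clause of that lemma each composite equals $\id_{W^{*}(\cS)}$, whence $\Phi_{*}^{-1}=(\Phi^{-1})_{*}$. Lemma~\ref{auts fix the Loeb measure space} then gives $\Phi_{*}(L^{\infty}(\cL))=L^{\infty}(\cL)$, so both $\Phi_{*}$ and $\Phi_{*}^{-1}$ carry $L^{\infty}(\cL)$ onto itself. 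I would also record that, being an algebra homomorphism, $\Phi_{*}$ intertwines the two conjugation actions: $\Phi_{*}(\phi f\phi^{-1})=\Phi(\phi)\,\Phi_{*}(f)\,\Phi(\phi)^{-1}$ for $\phi\in\cS$ and $f\in L^{\infty}(\cL)$.

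With these in hand the stated equivalence is immediate and symmetric. Assuming $\pi_2'(G)\cap\cS$ acts ergodically on $L^{\infty}(\cL)$, I would take an arbitrary $f\in L^{\infty}(\cL)$ fixed by the conjugation action of every element of $\pi_1'(G)\cap\cS$ and set $g=\Phi_{*}^{-1}(f)\in L^{\infty}(\cL)$; for each $\phi\in\pi_2'(G)\cap\cS$ the first step gives $\Phi(\phi)\in\pi_1'(G)\cap\cS$, so $\Phi(\phi)f\Phi(\phi)^{-1}=f$, and applying $\Phi_{*}^{-1}$ (using $\Phi_{*}^{-1}(\Phi(\phi))=\phi$) yields $\phi g\phi^{-1}=g$; ergodicity then forces $g\in\bC 1$, hence $f=\Phi_{*}(g)\in\bC 1$. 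Thus $\pi_1'(G)\cap\cS$ acts ergodically, and the converse is the same argument run with $\Phi^{-1}$ in place of $\Phi$ (using $\pi_2=\Phi^{-1}\circ\pi_1$). I do not anticipate a real obstacle; the only points demanding care are the invertibility of $\Phi_{*}$ and the fact that $\Phi$ preserves relative commutants taken \emph{inside $\cS$} rather than inside $W^{*}(\cS)$, both of which are handled above.
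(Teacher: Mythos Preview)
Your proposal is correct and follows essentially the same approach as the paper: transport invariance under conjugation via the extension $\Phi_{*}$, use Lemma~\ref{auts fix the Loeb measure space} to stay inside $L^{\infty}(\cL)$, and invoke the bijection $\Phi$ induces between the two relative commutants in $\cS$. Your write-up is slightly more explicit about the invertibility of $\Phi_{*}$ and the direction in which $\Phi$ maps the commutants, but the argument is the same.
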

\begin{proof}
Assume that $\pi_1(G)'\cap \cS$ acts ergodically on $L^\infty(\mathcal{L})$. Suppose $f\in L^\infty(\mathcal{L})$ is  such that $\phi f \phi^* = f$ for all $\phi\in \pi_2(G)'\cap \cS$. Then we have $\Phi(\phi) \Phi_{*}(f) \Phi(\phi)^* = \Phi_{*}(f)$. By Lemma \ref{auts fix the Loeb measure space} we have that $\Phi_{*}(f)\in L^{\infty}(\mathcal{L})$. By ergodicity of the action of  $\pi_1(G)'\cap \cS$, this means that $\Phi_{*}(f)= \lambda 1$ for some constant $\lambda$. Since $\Phi_{*}$ is a von Neumann algebra automorphism, this implies that $f=\lambda 1$.
The reverse implication follows by replacing $\Phi$ with $\Phi^{-1}$.
\end{proof}

We collect two more results due to P\v{a}unescu (adapting work of Kerr-Li) that are crucial.

\begin{lemm}[Theorem 2.13 in \cite{Pau1}, following up on Theorem 5.8 of \cite{KerrLi2}]\label{Paunescu1}
If $G$ is an initially subamenable group, then there exists a sofic embedding $\pi: G\to \cS$ such that $\pi(G)'\cap \cS$ acts ergodically on $L^{\infty}(\mathcal{L})$.
\end{lemm}

\begin{lemm}[Theorem 2.10 in \cite{Pau1} combined with Theorem 2 of \cite{elekxzabo}]\label{Paunescu2}
If $G$ is a non amenable sofic group, then there exists a sofic embedding $\pi: G\to \cS$ such that $\pi(G)'\cap \cS$  does not act ergodically on $L^{\infty}(\mathcal{L})$.
\end{lemm}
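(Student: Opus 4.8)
The plan is to build the required embedding by \emph{gluing together two inequivalent sofic embeddings}, arranged so that any element of the commutant cannot mix the two glued pieces and is therefore forced to fix the projection onto one of them; that projection is then a nonconstant element of the fixed point algebra, witnessing non-ergodicity. First I would invoke Theorem 2 of \cite{elekxzabo}: since $G$ is non amenable and sofic, it admits two sofic approximations $\sigma_n,\rho_n\colon G\to\Sym_n$ that are not asymptotically conjugate, so the induced embeddings $\pi_1=(\sigma_n)_{n\to\omega}$ and $\pi_2=(\rho_n)_{n\to\omega}$ of $G$ into $\mathcal{S}$ lie in distinct conjugacy classes, $[\pi_1]\ne[\pi_2]$ (no $u\in\mathcal{S}$ conjugates one to the other).

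Next I would form the glued embedding. Working on $\{1,\dots,n\}\sqcup\{1,\dots,n\}$, set $\pi_n=\sigma_n\sqcup\rho_n\colon G\to\Sym_{2n}$ and let $\pi=(\pi_n)_{n\to\omega}$, realized as an embedding $G\to\mathcal{S}$ after the harmless reindexing identifying $\prod_{n\to\omega}(\Sym_{2n},d_{2n})$ with a universal sofic group. Being a disjoint union of sofic approximations of equal size, $\pi$ is again a sofic approximation, and it is injective because $\pi_1$ already is. Let $p=(1_{\{1,\dots,n\}})_{n\to\omega}\in L^{\infty}(\mathcal{L})$ be the projection onto the first summand, so $\tau_{\mathcal{M}}(p)=\tfrac12$ and in particular $p\notin\mathbb{C}1$.

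The heart of the argument is to show $p$ lies in the fixed point algebra. Given $\phi\in\pi(G)'\cap\mathcal{S}$, observe that each $\pi(g)$ is block diagonal with respect to $p$, so writing $v=p\phi(1-p)$ the commutation $\phi\pi(g)=\pi(g)\phi$ yields $\pi_1(g)v=v\pi_2(g)$ for all $g\in G$; that is, $v$ is a $G$-equivariant intertwiner from $\pi_2$ to $\pi_1$, and symmetrically $(1-p)\phi p$ is an intertwiner from $\pi_1$ to $\pi_2$. If $\pi_1$ and $\pi_2$ are \emph{disjoint}, meaning no nonzero such intertwiner exists, then both corners vanish, $\phi$ commutes with $p$, and hence $\phi p\phi^{-1}=p$. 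As $\phi$ was arbitrary, $p$ is a nonconstant fixed point and $\pi(G)'\cap\mathcal{S}$ does not act ergodically on $L^{\infty}(\mathcal{L})$.

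The main obstacle is the gap between \emph{non-conjugacy}, which is what Elek--Szabo supplies, and \emph{disjointness}, which the corner computation requires; bridging it is exactly where Theorem 2.10 of \cite{Pau1} enters. In Paunescu's convex structure on sofic embeddings modulo conjugacy, extreme points are characterized precisely by ergodicity of the commutant action on $\mathcal{L}$. I would therefore argue by dichotomy: if some embedding of $G$ already has non-ergodic commutant we are done immediately; otherwise every embedding is extreme, so the distinct classes $[\pi_1]\ne[\pi_2]$ are extreme and hence disjoint, the corner computation applies to the glued $\pi$, and $\pi$ — being a nontrivial convex combination of $[\pi_1]$ and $[\pi_2]$ — is non-extreme, so by the same characterization its commutant acts non-ergodically. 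The delicate points to pin down are that extremality coincides with the factoriality/disjointness needed to kill the intertwiner $v$, and that the concrete disjoint-union embedding $\pi$ genuinely represents the convex-combination class; both are the content supplied by \cite[Theorem 2.10]{Pau1}, and I expect verifying this correspondence to be where the real work lies.
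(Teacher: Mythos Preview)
Your proposal is correct and, once you strip away the detour, follows exactly the paper's approach: take two non-conjugate sofic embeddings from Elek--Szabo, form their nontrivial convex combination (your disjoint union $\pi$), and invoke Theorem~2.10 of \cite{Pau1} to conclude the commutant acts non-ergodically. The corner computation, the disjointness discussion, and the dichotomy are all unnecessary---the paper's proof is the single sentence ``a nontrivial convex combination of non-conjugate embeddings is non-ergodic by \cite[Theorem~2.10]{Pau1}''---so you can drop the intertwiner analysis entirely and go straight to the last clause of your final paragraph.
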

\begin{proof}
    Using two non conjugate sofic embeddings from Theorem 2 of \cite{elekxzabo}, one takes a non trivial convex combination (in the sense of \cite{Pau1}) to obtain an embedding $\pi: G\to \cS$ such that $\pi(G)'\cap \cS$  does not act ergodically on $L^{\infty}(\mathcal{L})$ by Theorem 2.10 of \cite{Pau1}.
\end{proof}

\begin{defn}
Let $G$ be a sofic group. Say that it satisfies the \emph{generalized Elek-Szabo property} if for any two embeddings $\pi_1,\pi_2: G\to \mathcal{S}$, there exists $\Phi\in Aut(\mathcal{S})$ satisfying $\Phi\circ \pi_1=\pi_2$.
\end{defn}

We are now ready to prove the main result.

\begin{thm}\label{any two embeddings are conjugate by automorphism implies amenable}
Let $G$ be a sofic group that is initially subamenable. Then $G$ satisfies the generalized Elek-Szabo property if and only if $G$ is amenable.

\end{thm}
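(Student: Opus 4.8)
The plan is to prove both directions using the lemmas already established.

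For the ``if'' direction, suppose $G$ is amenable. Then $G$ is automatically sofic and initially subamenable, so there is nothing to reconcile there; I need to show the generalized Elek--Szabo property holds. In fact the stronger classical Kerr--Li/Elek--Szabo uniqueness theorem tells us that any two sofic approximations of an amenable group are asymptotically conjugate, which at the level of the universal sofic group $\mathcal{S}$ means that for any two embeddings $\pi_1,\pi_2\colon G\to\mathcal{S}$ there is an \emph{inner} automorphism $\Phi=\mathrm{Ad}(u)$ of $\mathcal{S}$ with $\Phi\circ\pi_1=\pi_2$. Inner automorphisms are automorphisms, so the generalized Elek--Szabo property is immediate. (One should be a little careful that the Elek--Szabo statement is phrased for sofic approximations into $\Sym(d_n)$ rather than embeddings into the ultraproduct; but conjugacy of embeddings into $\mathcal{S}=\prod_{n\to\omega}\Sym_n$ by an element of $\mathcal{S}$ is exactly the ultraproduct reformulation of asymptotic conjugacy along $\omega$, so this is a routine translation.)

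For the ``only if'' direction, which is the substantive part, I argue the contrapositive: assume $G$ is a sofic initially subamenable group that is \emph{not} amenable, and produce two embeddings not related by any automorphism of $\mathcal{S}$. By Lemma~\ref{Paunescu1} (Paunescu), since $G$ is sofic and initially subamenable there is an embedding $\pi_1\colon G\to\mathcal{S}$ whose relative commutant $\pi_1(G)'\cap\mathcal{S}$ acts ergodically on $L^\infty(\mathcal{L})$. By Lemma~\ref{Paunescu2}, since $G$ is non amenable and sofic there is an embedding $\pi_2\colon G\to\mathcal{S}$ whose relative commutant $\pi_2(G)'\cap\mathcal{S}$ does \emph{not} act ergodically on $L^\infty(\mathcal{L})$. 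Now suppose toward a contradiction that there were $\Phi\in\mathrm{Aut}(\mathcal{S})$ with $\Phi\circ\pi_1=\pi_2$ (or $\Phi\circ\pi_2=\pi_1$; either way). Then Lemma~\ref{main lemma Jung} says ergodicity of $\pi_1(G)'\cap\mathcal{S}$ on $L^\infty(\mathcal{L})$ is equivalent to ergodicity of $\pi_2(G)'\cap\mathcal{S}$ on $L^\infty(\mathcal{L})$ — contradicting that exactly one of the two actions is ergodic. Hence no such $\Phi$ exists and $G$ fails the generalized Elek--Szabo property.

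The conceptual heart, and the place where all the preparatory work is spent, is Lemma~\ref{main lemma Jung}: the claim that ergodicity of the relative-commutant action on the Loeb space is an invariant of the automorphism-conjugacy class of an embedding. This in turn rests on Lemma~\ref{Loeb measure space is in the von Neumann algebra generated by S} ($L^\infty(\mathcal{L})\subseteq W^*(\mathcal{S})$, so an automorphism of $\mathcal{S}$ has something to say about $L^\infty(\mathcal{L})$ at all), on Lemma~\ref{lemm: extending to an aut of the vNa} (an automorphism $\Phi$ of $\mathcal{S}$ extends to a trace-preserving $*$-automorphism $\Phi_*$ of $W^*(\mathcal{S})$, using Paunescu's theorem that $\Phi$ is pointwise inner hence character-preserving), and on Lemma~\ref{auts fix the Loeb measure space} ($\Phi_*$ maps $L^\infty(\mathcal{L})$ onto itself). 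Given those, the proof of the main theorem itself is the short combinatorial argument above. So in the writeup I would simply invoke Lemmas~\ref{Paunescu1}, \ref{Paunescu2}, and \ref{main lemma Jung} for the forward direction and the classical amenable uniqueness theorem for the converse; no further obstacles remain once those are in place.

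\begin{proof}
Suppose first that $G$ is amenable. By the Kerr--Li/Elek--Szabo uniqueness theorem, any two sofic approximations of $G$ are asymptotically conjugate, which upon passing to the ultraproduct $\mathcal{S}=\prod_{n\to\omega}(\Sym_n,d_n)$ says precisely that for any two embeddings $\pi_1,\pi_2\colon G\to\mathcal{S}$ there is $u\in\mathcal{S}$ with $u\pi_1(g)u^{-1}=\pi_2(g)$ for all $g\in G$. Then $\Phi=\mathrm{Ad}(u)\in\mathrm{Aut}(\mathcal{S})$ satisfies $\Phi\circ\pi_1=\pi_2$, so $G$ satisfies the generalized Elek--Szabo property.

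Conversely, suppose $G$ is a sofic initially subamenable group that is not amenable. By Lemma~\ref{Paunescu1} there is a sofic embedding $\pi_1\colon G\to\mathcal{S}$ such that $\pi_1(G)'\cap\mathcal{S}$ acts ergodically on $L^\infty(\mathcal{L})$. By Lemma~\ref{Paunescu2} there is a sofic embedding $\pi_2\colon G\to\mathcal{S}$ such that $\pi_2(G)'\cap\mathcal{S}$ does not act ergodically on $L^\infty(\mathcal{L})$. If there were $\Phi\in\mathrm{Aut}(\mathcal{S})$ with $\pi_1=\Phi\circ\pi_2$, then Lemma~\ref{main lemma Jung} would force $\pi_1(G)'\cap\mathcal{S}$ to act ergodically on $L^\infty(\mathcal{L})$ if and only if $\pi_2(G)'\cap\mathcal{S}$ does, which is false. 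Hence no automorphism of $\mathcal{S}$ conjugates $\pi_2$ to $\pi_1$, and $G$ does not satisfy the generalized Elek--Szabo property.
\end{proof}
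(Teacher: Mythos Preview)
Your proof is correct and follows exactly the same approach as the paper: invoke the Elek--Szabo uniqueness theorem for the amenable direction, and for the converse combine Lemma~\ref{Paunescu1} and Lemma~\ref{Paunescu2} to obtain one embedding with ergodic commutant action and one without, then use Lemma~\ref{main lemma Jung} to conclude they cannot be conjugate by any automorphism. Your writeup is slightly more detailed than the paper's, but the logical structure is identical.
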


\begin{proof}
If $G$ is amenable, this follows from Theorem 2 in \cite{elekxzabo}. Conversely, if $G$ is initially subamenable but not amenable, from Lemma \ref{Paunescu1} there exists a sofic embedding $\pi: G\to \cS$ such that $\pi(G)'\cap \cS$ acts ergodically on $L^{\infty}(\mathcal{L})$. Moreover, from Lemma \ref{Paunescu2} there exists a sofic embedding $\rho: G\to \cS$ such that $\rho(G)'\cap \cS$  does not act ergodically on $L^{\infty}(\mathcal{L})$. These two embeddings cannot be automorphically conjugate from Lemma \ref{main lemma Jung}.
\end{proof}

\section{Automorphic conjugacy in the context of multiples}\label{mul}

In this section, we prove a generalization of Theorem \ref{any two embeddings are conjugate by automorphism implies amenable} where we are allowed to take multiples of our sofic embeddings. 
We thank the referee for the suggestion to consider and include results in this  general setting. 

\begin{defn}
Let $G$ be a group and let $\sigma_{j}\colon G\to \Sym_{d_{j}}$  for $j=1,2$ be maps. Define their \emph{direct sum}
\[\sigma_{1}\oplus \sigma_{2}\colon G\to \Sym_{d_{1}+d_{2}}\]
by
\[(\sigma_{1}\oplus \sigma_{2})(g)(j)=\begin{cases}
    \sigma_{1}(g)(j), &\textnormal{ if $1\leq j\leq d_{1}$}\\
    \sigma_{2}(g)(j-d_{1})+d_{1}, &\textnormal{ if $d_{1}+1\leq j\leq d_{1}+d_{2}$.}
\end{cases}\]
We use $\sigma^{\oplus r}$ for the direct sum of $\sigma$ with itself $r$-times.
\end{defn}
Note that if $(\sigma_{n,j})_{n},j=1,2$ are sofic approximations, then so is $\sigma_{n,1}\oplus \sigma_{n,2}$.
We recall the equivalence between sofic approximations introduced by P\v{a}unescu. Let $\sigma_{n,j}\colon G\to \Sym_{k_{n,j}}$ be sofic approximations and $\omega$ a free ultrafilter on $\bN$. In \cite{Pau1}, P\v{a}unescu defined  that $\sigma_{n,1}$ and $\sigma_{n,2}$ to be equivalent if there are nonnegative integers $q_{n,j},j=1,2$ so that $k_{n,1}q_{n,1}=k_{n,2}q_{n,2}$ and a $\pi\in \prod_{n\to\omega}(\Sym_{k_{n,1}q_{n,1}},d_{k_{n,1}q_{n,1}})$ with 
\[\pi(\sigma_{n,1}^{\oplus q_{n,1}}(g))_{n\to\omega}\pi^{-1}=(\sigma_{n,2}^{\oplus q_{n,2}}(g))_{n\to\omega}, \textnormal{ for all $g\in G$}.\]
In \cite{Pau1} it was also shown how to embed the space of equivalence classes as a closed convex subset of a Banach space. We will not go into the precise definition of the convex combination of two sofic approximations and refer the reader to \cite{Pau1} for the definitions. For a sofic approximation $\sigma_{n} \colon G\to \Sym_{k_{n}}$, we use $[(\sigma_{n})_{n}]$ for the equivalence class of $(\sigma_{n})_{n}$ under this equivalence. We already implicitly used this convex structure in the proof of Theorem \ref{any two embeddings are conjugate by automorphism implies amenable}. Since we need to use this structure in a more explicit manner in the context of multiples, we highlight the main features we used about it in the proof of Theorem \ref{any two embeddings are conjugate by automorphism implies amenable} (which we will once again need in this more general setting of multiples):
\begin{itemize}
    \item if $G$ is a sofic group, then the above space of equivalence classes reduces to a single point if and only if $G$ is amenable (this is a consequence of the uniqueness theorems of Kerr-Li, Elek-Szabo, see \cite[Observation 1.9]{Pau1}),
    \item as a consequence of the above, if $G$ is a nonamenable sofic group, then there is a sofic approximation which is not extremal in P\v{a}unescu's convex structure, (take a nontrivial convex combination of two inequivalent sofic approximations),
    \item if $\sigma$ is a sofic approximation with ergodic commutant, then $[\sigma]$ is extremal \cite[Theorem 2.10]{Pau1}.
\end{itemize}

For $k\in \bN$, we use $t_{k}\colon G\to \Sym_{k}$ for the trivial homomorphism.

\begin{lemm}\label{lemm: I guess this has to be written}

Let $\sigma_{n}\colon G\to \Sym_{k_{n}}$ be a sofic approximation. Let $r_{n}$ be a sequence of integers such that $\frac{r_{n}}{k_{n}+r_{n}}\to_{n\to\infty}0$. Let $\widetilde{\sigma}_{n}=\sigma_{n}\oplus t_{r_{n}}$. Fix a free ultrafiler $\omega$ on $\bN$. Then $(\sigma_{n})_{n}$ has ergodic commutant with respect to $\omega$ if and only if $(\widetilde{\sigma}_{n})_{n}$ has ergodic commutant with respect to $\omega$.

\end{lemm}

\begin{proof}
Set 
\[\cS=\prod_{n\to\omega}(\Sym_{k_{n}},d_{k_{n}}),\]
\[\widetilde{\cS}=\prod_{n\to\omega}(\Sym_{k_{n}+r_{n}},d_{k_{n}+r_{n}}).\]
Let $\sigma\colon G\to \cS$, $\widetilde{\sigma}\colon G\to \widetilde{\cS}$ be the homomorphisms induced from
$(\sigma_{n})_{n}$, $(\widetilde{\sigma}_{n})_{n}$.
If $(\psi_{n})_{n\to\omega}\in \sigma(G)'\cap \cS$, then $(\psi_{n}\oplus t_{r_{n}})_{n\to\omega}\in \widetilde{\sigma}(G)'\cap \cS$. Conversely, if $(\phi_{n})_{n\to\omega}\in \widetilde{\sigma}(G)'\cap \widetilde{\cS}$, set $E_{n}=\phi_{n}^{-1}(\{1,\cdots,k_{n}\})\cap \{1,\cdots,k_{n}\}$. Since $\frac{r_{n}}{k_{n}+r_{n}}\to_{n\to\infty}0$, we have that $\frac{|E_{n}|}{k_{n}}\to 1.$ For each $n$, choose an arbitrary bijection
\[\alpha_{n}\colon \{1,\cdots,k_{n}\}\setminus E_{n}\to \{1,\cdots,k_{n}\}\setminus [\phi_{n}(\{1,\cdots,k_{n}\})\cap \{1,\cdots,k_{n}\}],\]
and set $\widehat{\phi}_{n}=\phi_{n}\big|_{E_{n}}\sqcup \alpha_{n}$. It is then direct to check from the fact that $(\phi_{n})_{n\to\omega}\in \widetilde{\sigma}(G)'\cap \widetilde{\cS}$ that $(\widehat{\phi}_{n})_{n\to\omega}\in \sigma(G)'\cap \cS.$ Moreover these operations are inverse to each other (remember in $\cS$ we mod out by sequences whose distance to each other tends to $0$ along $\omega$).

From these observations, the lemma can be proved as follows. Set
\[L^{\infty}(\cL)=\prod_{n\to\omega}
D_{k_{n}}(\bC),\]
\[L^{\infty}(\widetilde{\cL})=\prod_{n\to\omega}D_{k_{n}+r{n}}(\bC).\]
If $f=(f_{n})_{n\to\omega}\in L^{\infty}(\cL)$ is fixed by $\sigma(G)'\cap \cS$ and not a scalar, then $(f_{n}1_{\{1,\cdots,k_{n}}\})_{n\to\omega}$ is fixed by $\widetilde{\sigma}(G)'\cap \widetilde{\cS}$ and not a scalar. Conversely, if $\widetilde{f}=(\widetilde{f}_{n})_{n\to\omega}\in L^{\infty}(\widetilde{\cL})$ is fixed by $\widetilde{\sigma}(G)'\cap \widetilde{\cS}$ and not a scalar, then $(\widetilde{f}_{n}|_{\{1,\cdots,k_{n}\}})_{n\to\omega}$ is fixed by $\sigma(G)'\cap \cS$ and not a scalar.

\end{proof}

\begin{prop}\label{prop: multiples, I guess}
Let $G$ be a countable group and suppose that 
$\sigma_{n,j}\colon G\to \Sym(m_{n,j})$
are sofic approximations for $j=1,2$.
Fix $\omega\in \beta(\bN) \setminus \bN$ and 
for $j=1,2$ define sofic embeddings $\sigma_{j}$  of $G$ by $\sigma_{j}(g)=(\sigma_{n,j}(g))_{n\to\omega}$.
Suppose that $\sigma_{1}$ has ergodic commutant and that  $[\sigma_{2}]$ is not extremal in P\v{a}unescu's convex structure.  

Then $\sigma_{1},\sigma_{2}$ do not have automorphically conjugate multiples in the following sense. Suppose we have $(m_{n,j})_{n=1}^{\infty}$, $(q_{n,j})_{n=1}^{\infty},(r_{n,j})_{n=1}^{\infty}$ sequences of integers for $j=1,2$ satisfying:
\[q_{n,1}m_{n,1}+r_{n,1}=q_{n,2}m_{n,2}+r_{n,2} \textnormal{ for all $n$, and}\]
\[\lim_{n\to\infty}\frac{r_{n,j}}{q_{n,j}m_{n,j}+r_{n,j}}=0, \textnormal{ for $j=1,2$}.\]
For $j=1,2$ define sofic approximations
\[\phi_{n,j}\colon G\to \Sym(k_{n})\]
by $\phi_{n,j}=\sigma_{n,j}^{\oplus q_{n,j}}\oplus t_{r_{n,j}}$. For $j=1,2$ set $\phi_{j}(g)=(\phi_{n,j}(g))_{n\to\omega}.$
Then $\phi_{1},\phi_{2}$ are not automorphically conjugate.

\end{prop}

\begin{proof}
By Lemma \ref{main lemma Jung}, it suffices to show $\phi_{1}$ has ergodic commutant and that $\phi_{2}$ does not. 
It follows from \cite[Proposition 2.8]{Pau1} and Lemma \ref{lemm: I guess this has to be written} that $\phi_{1}$ has ergodic commutant. Set 
\[\cS_{2}'=\prod_{k\to\omega}(\Sym_{q_{n,2}m_{n,2}}).\]
and define $\phi_{2}'\colon G\to \cS_{2}'$ by $\phi_{2}'(g)=(\sigma_{n,2}^{\oplus q_{n,2}}(g))_{n\to\omega}$. Then $\phi_{2}'$ and $\sigma_{2}$ are equivalent in P\v{a}unescu's convex structure, so $\phi_{2}'$ is also not extremal in this convex structure. By \cite[Theorem 2.10]{Pau1}, it follows that $\phi_{2}'$ does not have ergodic commutant. In particular, by Lemma \ref{lemm: I guess this has to be written} we have that $\phi_{2}$ does not have ergodic commutant.

\end{proof}

\begin{cor}\label{cor: multiples, I guess}
Let $G$ be an nonamenable group which is initially subamenable. Fix a free ultrafilter $\omega\in \beta\bN\setminus \bN$. Then there are sofic approximations $\sigma_{n,j}\colon G\to \Sym_{m_{n,j}},j=1,2$
so that if $q_{n,j},r_{n,j}$ are any sequences of integers with $q_{n,1}m_{n,1}+r_{n,1}=q_{n,2}m_{n,2}+r_{n,2}$ and 
\[\lim_{n\to\infty}\frac{r_{n,j}}{q_{n,j}k_{n,j}+r_{n,j}}=0, \textnormal{ for $j=1,2$}\]
then setting $\phi_{j}=(\sigma_{n,j}^{\oplus q_{n,j}}\oplus t_{r_{n,j}})_{n\to\omega}$ we have that $\phi_{1},\phi_{2}$ are not automorphically conjugate.

\end{cor}

\begin{proof}
 Since $G$ is intially subamenable, from Lemma \ref{Paunescu1} we can choose a sofic approixmation $\sigma_{1}=(\sigma_{n,1}\colon G\to \Sym_{m_{n,1}})_{n}$ as in the statement of the corollary so that that $g\mapsto (\sigma_{n,1}(g))_{n\to\omega}$ has ergodic commutant.  Since $G$ is not amenable, by taking nontrivial convex combinations  we can choose $\sigma_{2}=(\sigma_{n,2}\colon G\to \Sym_{m_{n,2}})_{n}$ as in the statement of the corollary so that $g\mapsto (\sigma_{n,2}(g))_{n\to\omega}$ is not extremal in P\v{a}unescu's convex structure. The corollary now follows from Proposition \ref{prop: multiples, I guess}.
\end{proof}

\textbf{Acknowledgments.}  We especially thank  Isaac Goldbring,  Jenny Pi and the anonymous referee for their several helpful comments that significantly helped improve the writing. We also thank Francesco Fournier-Facio and Liviu Pa\u{u}nescu for helpful comments.
B. Hayes gratefully acknowledges support from the NSF grant DMS-2144739. S. Kunnawalkam Elayavalli gratefully acknowledges support from the Simons Postdoctoral Fellowship.

%

\end{document}